\title{On an inferential semantics for \\ intuitionistic sentential logic}
\newtheorem{lemma}{Lemma}
\newtheorem{theorem}{Theorem}
\newtheorem{definition}{Definition}
\newcommand{\base}[1]{\mathscr{#1}}
\newcommand{\clause}[1]{\mathcal{#1}}
\newcommand{\supp}{\Vdash}
\newcommand{\setAtom}{\base{B}}
\renewcommand{\emptyset}{\varnothing}
\renewcommand{\phi}{\varphi}
\newcommand{\flatop}[1]{#1^\flat}
\thanks{\emph{Acknowledgements.} I would like to thank Rajeev Gore for introducing me to Mints' work and Tao Gu for his feedback on this paper.}
\author{Alexander V. Gheorghiu}
\address{School of Electronics and Computer Science, University of Southampton\\
University Road, Southampton, SO17 1BJ 
United Kingdom}
\address{Department of Computer Science, University College London\\
Gower St, London WC1E 6BT, UK}
\email{a.v.gheorghiu@soton.ac.uk}
\address{\textbf{ORCID:} 0000-0002-7144-6910}
\begin{document}

\begin{abstract}
Sandqvist’s base-extension semantics (B-eS) for intuitionistic sentential logic grounds meaning relative to bases (rather than, say, models), which are arbitrary sets of permitted inferences over sentences. While his soundness proof is standard, his completeness proof is quite unusual. It closely parallels a method introduced much earlier by Mints, who developed a resolution-based approach to intuitionistic logic using a systematic translation of formulas into sentential counterparts. In this short note, we highlight the connection between these two approaches and show that the soundness and completeness of B-eS follow directly from Mints’ theorem. While the result is modest, it reinforces the relevance of proof-search to proof-theoretic semantics and suggests that resolution methods have a deeper conceptual role in constructive reasoning than is often acknowledged.
\end{abstract}

\keywords{base-extension semantics, proof-theoretic semantics, resolution, intuitionistic logic, soundness, completeness, clauses}

\maketitle

\section{Introduction}

Sandqvist \cite{Sandqvist2015IL} introduced a \emph{base-extension semantics} (B-eS) for intuitionistic propositional logic, wherein logical signs are defined relative to collections of inference rules for basic sentences, termed \emph{bases}, and consequence is determined via hypothetical \emph{extensions} over such bases. This framework provides an alternative to model-theoretic semantics while preserving soundness and completeness for intuitionistic logic. While the broader motivations for such semantics lie within the research programme of \emph{proof-theoretic semantics} (P-tS), our focus in this paper is on revisiting B-eS and presenting a new proof of the soundness and completeness theorem using techniques developed by Mints for resolution systems.

Sandqvist's proof of soundness follows a standard approach, demonstrating that the semantic judgment relation is preserved across the proof rules of the logic. However, his completeness proof departs from conventional methods. As Sandqvist \cite{Sandqvist2015IL} himself remarks:
\begin{quote}
``The mathematical resources required for the purpose are quite elementary; there will be no need to invoke canonical models, K\"onig’s lemma, or even bar induction. The proof proceeds, instead, by \emph{simulating} an intuitionistic deduction using basic sentences within a base specifically tailored to the inference at hand.''
\end{quote}
Here, the term \emph{basic sentences} refers to atomic formulae distinct from $\bot$, which we denote by $\mathbb{B}$. Indeed, so powerful is this construction that it has been deployed to handle substructural logics with ease~\cite{IMLL,gheorghiu2024proof,BI}. 

The key idea underlying this simulation technique is the systematic translation of formulas into basic sentences. Suppose $\supp \phi$ and that $\phi$ contains a subformula $a \land b$, where $a, b \in \mathbb{B}$. Following the natural deduction rules governing conjunction, Sandqvist makes sure to include the following rules in the ``specifically tailored'' base $\base{N}$ for $\phi$:
\[
\infer{r}{~~a & b~~} \qquad \infer{~~a~~}{r} \qquad \infer{~~b~~}{r}
\]
where $r$ is a fresh basic sentence representing $a \land b$. This means that $r$ behaves in $\base{N}$ as $a \land b$ behaves in Gentzen's NJ~\cite{Gentzen} --- that is, as in intuitionistic sentential logic. 

More generally, each subformula $\chi$ of $\phi$ is assigned a corresponding basic counterpart $\flatop{\chi}$ --- for example, $r = \flatop{(a \land b)}$. The major work is establishing their equivalence within $\base{N}$:
\[
\chi \supp_{\base{N}} \flatop{\chi} \quad \text{and} \quad \flatop{\chi} \supp_{\base{N}} \chi.
\]
Since we assume $\supp \phi$, it follows that $\supp \flatop{\phi}$, and given that every rule in $\base{N}$ corresponds to an intuitionistic natural deduction rule, we conclude $\vdash \phi$, as required.

The \emph{flattening} technique employed here is strikingly different from standard completeness proofs. However, as the saying goes, ``\emph{all theorems were already proved in the Soviet Union}''. In particular, this approach bears a strong resemblance to a method developed by Mints~\cite{Mints1981,Mints1985,Mints1986,Mints1987,Mints1990} from the 1980s for evaluating intuitionistic consequence via resolution systems. Mints' approach systematically transforms proof systems for various logics, including intuitionistic logic, into resolution-based systems while preserving derivational structure. This extends earlier work by Maslov \cite{Maslov1971} on resolution calculi for classical predicate logic.

Mints defines a class of formulas, \emph{clauses}, with a particularly simple structure (details below). Given a formula $\phi$, he constructs a set of clauses $M$ such that for some designated basic sentence $g$,
\[
\vdash \phi \quad \text{iff} \quad M \vdash g .\tag{\text{$\dagger$}}
\]
To do this, he associates to each subformula $\chi$ of $\phi$ a basic counterpart $\flatop{\chi}$. The set $M$ is constructed by adding clauses ensuring that $\flatop{\chi}$ correctly encodes the logical behaviour of $\chi$. For instance, if $\chi = \chi_1 \land \chi_2$, the following clauses are introduced:
\[
\flatop{\chi_1} \land \flatop{\chi_2} \to \flatop{\chi}, \quad \flatop{\chi} \to \flatop{\chi_1}, \quad \flatop{\chi} \to \flatop{\chi_2}.
\]
The key observation is that relative to such clauses,
\[
\flatop{(\chi_1 \land \chi_2)} \leftrightarrow \flatop{\chi_1} \land \flatop{\chi_2}.
\]
Thus, reasoning about $\phi$ reduces to reasoning about formulas with at most three atomic components. In ($\dagger$), $M$ is the clause set encoding $\phi$ and $g = \flatop{\phi}$.

There are clear similarities even at the level of detail presented here. There is a natural correspondence between bases and clausal systems such that for any formula $\phi$, Sandqvist's $\base{N}$ and Mints' $M$ coincide when identical atomic names are used for subformulas. More significantly, this bijection establishes a pointwise correspondence between support in a base and proof-search in a clause set, aligning validity in B-eS for $\phi$ with resolution in $M$ for $g$. Consequently, the soundness and completeness of intuitionistic logic with respect to Sandqvist's B-eS \textit{both} follow as corollaries of Mints' theorem.

\section{Sandqvist's Base-extension Semantics}

We present a concise but complete formulation of the B-eS for intuitionistic logic as introduced by Sandqvist~\cite{Sandqvist2015IL}. The fundamental notion in this framework is that of \emph{derivability in a base}, which serves as the foundation for making assertions. We defer detailed discussion of the motivations behind B-eS to elsewhere --- see, for example, Sandqvist~\cite{Sandqvist2005inferentialist,Sandqvist2009CL,Sandqvist2015IL} and de Campos Sanz al.~\cite{Piecha2015failure}, and Gheorghiu~\cite{ggp2024practice,Gheorghiu2024ProofTheoretic}.

A \emph{base} consists of a collection of inference rules over basic sentences --- that is, atomic formulae other than $\bot$, whose collection is denoted $\mathbb{B}$. Rather than employing standard natural deduction notation, we introduce an inline notation for clarity.

\begin{definition}[Atomic Rule]
    An \emph{atomic rule} is either:
    \begin{itemize}
        \item a \emph{nullary rule} of the form $\epsilon \Rightarrow c$, or
        \item a \emph{general rule} of the form 
        \[
        (P_1 \Rightarrow p_1),\dots,(P_n \Rightarrow p_n) \Rightarrow c,
        \]
        where $P_1, \dots, P_n$ are finite sets of basic sentences, and $p_1, \dots, p_n, c$ are basic sentences.
    \end{itemize}
    If $P_i = \emptyset$ for all $i = 1, \dots, n$, we abbreviate this as $p_1, \dots, p_n \Rightarrow c$.
\end{definition}

\begin{definition}[Base]
    A \emph{base} $\base{B}$ is a set of atomic rules.
\end{definition}

Intuitively, an inference
\[
\infer{c}{\deduce{p_1}{[P_1]} & \dots & \deduce{p_n}{[P_n]}}
\]
corresponds to the atomic rule $(P_1 \Rightarrow p_1),\dots,(P_n \Rightarrow p_n) \Rightarrow c$. Unlike standard natural deduction systems, these rules are considered \emph{as given}, meaning that no substitution of propositional letters is permitted when applying them.

\begin{definition}[Derivability in a base] \label{def:derbase}
   Given a base $\base{B}$, the derivability relation $\vdash_{\base{B}}$ is the smallest relation satisfying the following conditions for any $P \subseteq \mathbb{B}$:
   \begin{itemize}
       \item If $p \in P$, then $P \vdash_{\base{B}} p$.
       \item If $(\epsilon \Rightarrow c) \in \base{B}$, then $P \vdash_{\base{B}} c$.
       \item If $\big((P_1 \Rightarrow p_1),\dots,(P_n \Rightarrow p_n) \Rightarrow c\big) \in \base{B}$ and $P, P_i \vdash_{\base{B}} p_i$ for all $i = 1, \dots, n$, then $P \vdash_{\base{B}} c$.
   \end{itemize}
\end{definition}
The semantic judgment in B-eS is defined in terms of a \emph{support} relation, which plays a role analogous to satisfaction in model-theoretic semantics.

\begin{definition}[Support] \label{def:supp}
   A formula $\phi$ is \emph{supported} in a base $\base{B}$ relative to a context $\Gamma$ if $\Gamma \supp_{\base{B}} \phi$, as defined in Figure~\ref{fig:sandqvist}. It is \textit{supported} --- denoted              $\Gamma \supp \phi$ ---  iff $\Gamma \supp_{\base{B}} \phi$ for any base $\base{B}$.
\end{definition}

\begin{figure}[t]
\hrule 
\vspace{2mm}
 \[
        \begin{array}{l@{\quad}c@{\quad}l@{\quad}r}
            \supp_{\base{B}} p  & \text{iff} &   \vdash_{\base{B}} p & \text{(At)}  \\[1mm]
             \supp_{\base{B}} \phi \to \psi & \text{iff} & \phi \supp_{\base{B}} \psi & (\to) \\[1mm]
             \supp_{\base{B}} \phi \land \psi   & \text{iff} & \supp_{\base{B}} \phi \text{ and } \supp_{\base{B}} \psi & (\land) \\[1mm]
             \supp_{\base{B}} \phi \lor \psi & \text{iff} &  \forall \base{C} \supseteq \base{B}, \forall p \in \setAtom, \\
             & & \text{if } \phi \supp_{\base{C}} p \text{ and }  \psi \supp_{\base{C}} p, \text{ then } \supp_{\base{C}} p & (\lor)  \\[1mm]
             \supp_{\base{B}} \bot & \text{iff} &    \supp_{\base{B}} p \text{ for any } p \in \setAtom & (\bot) \\[1mm]
           \Gamma \supp_{\base{B}} \phi & \text{iff} & \forall \base{C} \supseteq \base{B}, \text{ if } \supp_{\base{C}} \psi \text{ for any } \psi \in \Gamma, \text{ then } \supp_{\base{C}} \phi &  (\text{Inf})
        \end{array}
 \]
 \vspace{2mm}
\hrule
\caption{Sandqvist's Base-Extension Semantics} 
\label{fig:sandqvist}
\end{figure}

This is an inductive definition, but the induction measure is not the size of the formula $\phi$. Instead, it is a measure of the \textit{logical weight} $w$ of $\phi$. Following Sandqvist~\cite{Sandqvist2015IL},
\[
w(\phi) := 
\begin{cases}
    0 & \mbox{if $\phi \in \setAtom$} \\
    1 & \mbox{if $\phi = \bot$} \\
   w(\phi_1)+w(\phi_2)+1 & \mbox{if $\phi = \phi_1 \circ \phi_2$ for any $\circ \in \{\to, \land, \lor\}$}
\end{cases}
\]
In each clause of Figure~\ref{fig:sandqvist}, the sum of the weights of the formulas
flanking the support judgment in the definiendum exceeds the corresponding number for any occurrence of the judgment in the definiens. We refer to induction relative to this measure as \emph{semantic} induction to distinguish it from \emph{structural} induction.

A key feature of B-eS is its resemblance to Kripke semantics~\cite{kripke1965semantical}, particularly in employing an ``extension'' relation to handle consequence. However, its treatment of disjunction ($\lor$) differs significantly, recalling the second-order characterization of Prawitz~\cite{Prawitz2006natural}. Piecha et al.~\cite{Piecha2015failure} have shown that using a conventional treatment of disjunction, 
\[
\supp_{\base{B}} \phi \lor \psi \qquad \text{iff} \qquad \supp_{\base{B}} \phi \text{ or } \supp_{\base{B}} \psi,
\]
results in the incompleteness of intuitionistic sentential logic. Indeed, Stafford~\cite{Stafford2021} has demonstrated that the resulting semantics corresponds to an intermediate logic known as \emph{generalized inquisitive logic}.

Before proceeding, we state the following elementary lemma:

\begin{lemma}[Sandqvist~\cite{Sandqvist2015IL}]
\label{lem:empty-base}
    $\Gamma \supp \phi$ iff $\Gamma \supp_\emptyset \phi$.
\end{lemma}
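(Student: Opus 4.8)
The plan is to prove both directions by directly unfolding the definition of the support relation, the key observation being that $\emptyset$ is the least base under inclusion, so that support relative to $\emptyset$ already carries a universal quantification over \emph{all} bases.

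For the forward direction I would simply recall that, by Definition~\ref{def:supp}, the unsubscripted judgment $\Gamma \supp \phi$ means $\Gamma \supp_{\base{B}} \phi$ for every base $\base{B}$. Instantiating $\base{B} = \emptyset$ yields $\Gamma \supp_\emptyset \phi$ immediately, with no further work.

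For the converse, suppose $\Gamma \supp_\emptyset \phi$. Unfolding the clause $(\text{Inf})$ of Figure~\ref{fig:sandqvist}, this asserts that for every base $\base{C} \supseteq \emptyset$, if $\supp_{\base{C}} \psi$ for all $\psi \in \Gamma$ then $\supp_{\base{C}} \phi$. The crux is that $\emptyset \subseteq \base{C}$ holds for \emph{every} base $\base{C}$, so the side condition $\base{C} \supseteq \emptyset$ is vacuous and the quantifier ranges over all bases whatsoever. To conclude $\Gamma \supp \phi$, I would fix an arbitrary base $\base{B}$ and show $\Gamma \supp_{\base{B}} \phi$; by $(\text{Inf})$ this requires that for every $\base{C} \supseteq \base{B}$, if $\supp_{\base{C}} \psi$ for all $\psi \in \Gamma$ then $\supp_{\base{C}} \phi$. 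Since each such $\base{C}$ is in particular a base, it already falls within the scope of the hypothesis $\Gamma \supp_\emptyset \phi$, which supplies exactly this implication. As $\base{B}$ was arbitrary, $\Gamma \supp_{\base{B}} \phi$ holds for all $\base{B}$, i.e.\ $\Gamma \supp \phi$.

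I expect no genuine obstacle here: the entire content reduces to the monotonicity of a universal quantifier over bases together with the minimality of $\emptyset$. The only point demanding a little care is to apply the $(\text{Inf})$ clause at the top level alone, treating the inner empty-context judgments $\supp_{\base{C}} \psi$ and $\supp_{\base{C}} \phi$ as fixed data that are identical across the two unfoldings; in particular no semantic induction on the weight $w$ is needed, since neither $\Gamma$ nor $\phi$ is decomposed.
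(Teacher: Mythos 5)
Your proof is correct and matches the intended argument: the paper itself states this lemma without proof (deferring to Sandqvist), and the standard justification is exactly what you give --- the forward direction by instantiating the universal quantifier at $\base{B} = \emptyset$, and the converse by observing that under clause $(\text{Inf})$ the extensions of $\emptyset$ exhaust all bases, so any $\base{C} \supseteq \base{B}$ is already covered by the hypothesis $\Gamma \supp_\emptyset \phi$. Your closing caveat is also well taken: since $(\text{Inf})$ is applied only at the top level and the inner judgments $\supp_{\base{C}} \psi$, $\supp_{\base{C}} \phi$ are literally the same in both unfoldings, no semantic induction on the weight $w$ is required.
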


This concludes our presentation of the B-eS framework.

\section{Mints' Clausal Representation Theorem}
We present a concise but complete formulation of the result by Mints~\cite{Mints1981,Mints1985,Mints1990} required for this paper. The key concept is that of an \emph{intuitionistic clause}, which has the following syntactic forms:
\[
    (p \to q^*) \to r, \quad  p \to (q \lor r), \quad  (p_1 \land \dots \land p_n) \to q^*,
\]
where:
\begin{itemize}
    \item $p, q, r, p_1, \dots, p_n$ are basic propositions;
    \item $q^*$ denotes either $q$ or $\bot$ (absurdity).
\end{itemize}
A clause is \emph{basic} if it contains only basic propositions. A (basic) \emph{clausal system} is a collection of (basic) clauses.

Let $\phi$ be a formula, and assume without loss of generality that it does not contain occurrences of $\bot$ except as the conclusion of an implication (i.e., only in subformulas of the form $\chi \to \bot$). Define $\Xi$ as the set of subformulas of $\phi$. We introduce an injection $\flatop{(-)}: \Xi \to \mathbb{B}$ such that $\flatop{p} = p$ whenever $p \in \Xi \cap \mathbb{B}$.

Let $\mathbb{X}$ be a set of basic sentences. The clausal translations for complex subformulas of $\phi$ are defined as follows:
\begin{align*}
    \clause{M}_\mathbb{X}(\phi_1 \land \phi_2) &:= \{ (\flatop{(\phi_1 \land \phi_2)} \to \flatop{\phi_1}), (\flatop{(\phi_1 \land \phi_2)} \to \flatop{\phi_2}), \\ 
    &\quad (\flatop{\phi_1} \land \flatop{\phi_2} \to \flatop{(\phi_1 \land \phi_2)}) \}, \\
    \clause{M}_\mathbb{X}(\phi_1 \lor \phi_2) &:= \{ (\flatop{\phi_1} \to \flatop{(\phi_1 \lor \phi_2)}), (\flatop{\phi_2} \to \flatop{(\phi_1 \lor \phi_2)}) \} \\
    &\quad \cup \{(\flatop{(\phi_1 \lor \phi_2)} \land (\flatop{\phi_1} \to x) \land (\flatop{\phi_2} \to x) \to x) \mid x \in\mathbb{X} \}, \\
    \clause{M}_\mathbb{X}(\phi_1 \to \phi_2) &:= \{ (\flatop{\phi_1} \land \flatop{\phi_2} \to \flatop{\phi_2}), (\flatop{\phi_1} \to \flatop{\phi_2}) \}.
\end{align*}

The full clausal system is then defined by collecting these clauses along with additional clauses governing $\flatop{\bot}$ (representing $\bot$):
\[
    \clause{M}_\mathbb{X} := \bigcup_{\chi \in \Xi} \clause{M}_\mathbb{X}(\chi) 
    \cup \{(\bigwedge\mathbb{X} \land y \to \flatop{\bot})\} 
    \cup \{ (\flatop{\bot} \to y), (\flatop{\bot} \to x) \mid x \in\mathbb{X} \},
\]
where $\bigwedge\mathbb{X}$ denotes the conjunction of all elements of $X$ and $y$ is some fresh basic sentence. 

\begin{lemma}[Mints' Clausal Theorem~\cite{Mints1981,Mints1985,Mints1990}]
    \label{lem:mints}
    Let $X$ be the range of $\flatop{(-)}$, i.e., $X:= \{\flatop{\chi} \mid \chi \in \Xi\}$. Let $\phi[\flatop{\bot}/\bot]$ be the formula obtained by replacing all occurrences of $\bot$ in $\phi$ with $\flatop{\bot}$. Then:
    \[
        \vdash \phi[\flatop{\bot}/\bot] \quad \text{iff} \quad \clause{M}_\mathbb{X} \vdash \flatop{\phi}.
    \]
\end{lemma}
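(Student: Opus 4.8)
The plan is to prove the biconditional $\vdash \phi[\flatop{\bot}/\bot] \iff \clause{M}_\mathbb{X} \vdash \flatop{\phi}$ by establishing, as the central technical fact, that each flattened atom $\flatop{\chi}$ is interprovably equivalent to the flattening of the immediate logical structure of $\chi$, relative to the clausal system $\clause{M}_\mathbb{X}$. Concretely, I would first prove by structural induction on each subformula $\chi \in \Xi$ the pair of \emph{equivalence lemmas}
\[
\clause{M}_\mathbb{X} \vdash \flatop{\chi} \to \chi^\sharp \qquad \text{and} \qquad \clause{M}_\mathbb{X} \vdash \chi^\sharp \to \flatop{\chi},
\]
where $\chi^\sharp$ denotes $\flatop{\chi_1} \land \flatop{\chi_2}$, $\flatop{\chi_1} \lor \flatop{\chi_2}$, or $\flatop{\chi_1} \to \flatop{\chi_2}$ according to the top connective of $\chi$ (and $\flatop{\chi}=\chi$ for atoms). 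The forward and backward directions of these equivalences are exactly what the defining clauses in $\clause{M}_\mathbb{X}(\chi)$ are designed to deliver: the projection clauses and the introduction clause for $\land$, the injection clauses together with the $\lor$-elimination schema for $\lor$, and the relevant clauses for $\to$. The disjunction case is the one place where the auxiliary set $\mathbb{X}$ does real work, since the elimination clause is instantiated at every $x \in \mathbb{X}$, and I will need $\flatop{\chi}$ itself to lie in $\mathbb{X}$ to close the argument.

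Granting these equivalences, I would lift them from atoms to whole subformulas: a second induction shows $\clause{M}_\mathbb{X} \vdash \flatop{\chi} \leftrightarrow \chi[\flatop{\bot}/\bot]$ for every $\chi \in \Xi$, by substituting the already-established equivalences for the immediate subflattenings into the single-layer equivalence above and using that intuitionistic provability is closed under such substitution of interprovable formulas. In particular, taking $\chi = \phi$ yields
\[
\clause{M}_\mathbb{X} \vdash \flatop{\phi} \leftrightarrow \phi[\flatop{\bot}/\bot].
\]
This bridging equivalence immediately gives both directions of the theorem once we also record that the clauses governing $\flatop{\bot}$ force it to behave as an absurdity: the clause $\bigwedge\mathbb{X} \land y \to \flatop{\bot}$ together with $\flatop{\bot} \to x$ for each $x$ makes $\flatop{\bot}$ explosive relative to $\mathbb{X}$, so that reasoning with $\flatop{\bot}$ under $\clause{M}_\mathbb{X}$ mirrors reasoning with $\bot$ in NJ.

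From the bridging equivalence the two implications follow cleanly. For the left-to-right direction, assume $\vdash \phi[\flatop{\bot}/\bot]$; since $\clause{M}_\mathbb{X}$ derives $\phi[\flatop{\bot}/\bot] \to \flatop{\phi}$, monotonicity of $\vdash$ under adding hypotheses yields $\clause{M}_\mathbb{X} \vdash \flatop{\phi}$. For the converse, assume $\clause{M}_\mathbb{X} \vdash \flatop{\phi}$; combining with $\clause{M}_\mathbb{X} \vdash \flatop{\phi} \to \phi[\flatop{\bot}/\bot]$ gives $\clause{M}_\mathbb{X} \vdash \phi[\flatop{\bot}/\bot]$, and I then discharge the clausal hypotheses by showing each clause of $\clause{M}_\mathbb{X}$ is itself an intuitionistic theorem after the subformula flattenings are unfolded via the equivalences — so they contribute nothing new and can be removed, leaving $\vdash \phi[\flatop{\bot}/\bot]$.

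I expect the main obstacle to be the discharge step in the converse direction, that is, arguing that every clause in $\clause{M}_\mathbb{X}$ becomes intuitionistically derivable once its flattened atoms are read back as their genuine subformulas, so that a derivation of $\flatop{\phi}$ \emph{from} $\clause{M}_\mathbb{X}$ can be internalized into an axiom-free derivation of $\phi[\flatop{\bot}/\bot]$. The delicate clauses are again those for disjunction: the $\lor$-elimination schema is sound only because each instance corresponds to a legitimate application of NJ's $\lor$-elimination with the target in $\mathbb{X}$, and care is needed to ensure that the freshness of $y$ and the explosion clauses for $\flatop{\bot}$ do not introduce derivations with no intuitionistic analogue. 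Handling this faithfully — most likely via a translation that maps each clausal derivation step to a corresponding NJ inference — is where the bulk of the verification lies.
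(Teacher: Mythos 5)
Your plan hinges on the bridging biconditional $\clause{M}_\mathbb{X}\vdash\flatop{\chi}\leftrightarrow\chi[\flatop{\bot}/\bot]$, and that biconditional is false --- the failure is exactly at the disjunction case you flag but do not resolve. The clauses for $\chi=\chi_1\lor\chi_2$ license elimination only at \emph{atomic} targets $x\in\mathbb{X}$; the formula $\flatop{\chi_1}\lor\flatop{\chi_2}$ is not such a target, and your proviso that $\flatop{\chi}$ itself lie in $\mathbb{X}$ buys nothing, since instantiating the elimination schema at $x=\flatop{\chi}$ yields only the triviality $\flatop{\chi}\land\cdots\to\flatop{\chi}$. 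Concretely, for $\phi=p\lor q$ with $r=\flatop{(p\lor q)}$, take the Kripke model with root $w$ and two terminal successors $w_p,w_q$, where $w$ forces only $r$, $w_p$ forces $\{r,p\}$, and $w_q$ forces $\{r,q\}$ (with $\flatop{\bot}$ and $y$ false everywhere): every clause of $\clause{M}_\mathbb{X}$ is forced at every world, yet $r\to p\lor q$ fails at $w$, so $\clause{M}_\mathbb{X}\not\vdash\flatop{(p\lor q)}\to p\lor q$. The restriction to atomic targets is not a removable technicality: it is what makes the members of $\clause{M}_\mathbb{X}$ clauses at all, and it is precisely the feature that mirrors Sandqvist's atomic $(\lor)$ condition. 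Your claim that the $\bot$-clauses make $\flatop{\bot}$ ``mirror $\bot$ in NJ'' overreaches for the same reason: $\flatop{\bot}$ explodes only into $\mathbb{X}\cup\{y\}$, and $\clause{M}_\mathbb{X}\not\vdash\flatop{\bot}\to\bot$ (any model making all atoms true is a countermodel).

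Nor is the damage confined to your proof of the equivalence lemma: the half you actually use for left-to-right, $\clause{M}_\mathbb{X}\vdash\phi[\flatop{\bot}/\bot]\to\flatop{\phi}$, is itself false once a disjunction sits in a negative position. For $\phi=(s\to(p\lor q))\to t$, writing $a=\flatop{(s\to(p\lor q))}$, $r=\flatop{(p\lor q)}$, $g=\flatop{\phi}$ (and using the evidently intended implication clauses $(\flatop{\chi_1}\to\flatop{\chi_2})\to\flatop{\chi}$ and $\flatop{\chi_1}\land\flatop{\chi}\to\flatop{\chi_2}$; the display in the paper is garbled), the four-world model $w\leq v\leq u_p,u_q$ forcing $\{a\}$, $\{a,s,r,t,g\}$, $\{a,s,r,t,g,p\}$, $\{a,s,r,t,g,q\}$ respectively validates every clause of $\clause{M}_\mathbb{X}$ and forces $\phi$ at $w$ but not $g$. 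So no repair of your induction can deliver that implication: what is true is only the \emph{rule} ``if $\vdash\phi[\flatop{\bot}/\bot]$ then $\clause{M}_\mathbb{X}\vdash\flatop{\phi}$'', and it must be proved by induction on a derivation, simulating each NJ (or cut-free sequent) inference by resolution steps over the clauses --- that simulation is the real content of Mints' theorem. (Note the paper itself does not prove this lemma but imports it from Mints; its downstream Lemma~\ref{lem:reductions} nevertheless shows the correct shape: flattening equivalences stated at the level of the \emph{derivability relation}, with the $\lor$ and $\bot$ cases phrased via atomic targets, never as object-level biconditionals.) Your instinct for the converse direction is essentially sound --- read each $\flatop{\chi}$ back as $\chi$, send $y$ and $\flatop{\bot}$ to $\bot$ (keeping $\flatop{\bot}$ atomic does not work, since $\flatop{\bot}\to x$ does not read back to a theorem), and check that every clause becomes an intuitionistic theorem, so derivability from $\clause{M}_\mathbb{X}$ collapses by substitution, with no bridging equivalence needed. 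But you have the difficulty inverted: that discharge step is the easy direction, while the direction you settle by \emph{modus ponens} on a nonexistent implication is where all the work lies.
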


Observe that if $X$ contains only basic sentences, then the clausal system $\clause{M}_\mathbb{X}$ is basic. We have chosen a specific form of Mints'~\cite{Mints1981,Mints1985,Mints1990} construction suited to our needs, as the resulting clausal systems exhibit a one-to-one correspondence with bases (see below). From a pure proof-search perspective, an alternative clausal representation with a smaller search space could be employed (cf.~\cite{Mints1981,Mints1985}).

This concludes the background on Mints' theorem necessary for this paper. 

\section{Soundness and Completeness}

In this section, we use Mints' theorem to establish the soundness and completeness of intuitionistic sentential logic with respect to Sandqvist's base-extension semantics (B-eS). We begin by proving three simple metatheorems that will be necessary for the proof.

Due to the inference condition (\text{Inf}) in B-eS, we require the following result:

\begin{lemma}
    \label{lem:inf-resolution}
    Let $C$ be a clausal system. Then:
    \[
        p, C \vdash q \quad \text{iff} \quad \text{for any } D \supseteq C, \text{ if } D \vdash p, \text{ then } D \vdash q.
    \]
\end{lemma}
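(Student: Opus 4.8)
The plan is to reduce both directions to the structural properties of the consequence relation $\vdash$ --- identity ($p \in \Delta$ implies $\Delta \vdash p$), monotonicity or weakening ($\Delta \vdash A$ and $\Delta \subseteq \Delta'$ imply $\Delta' \vdash A$), and cut or transitivity ($\Delta \vdash A$ together with $\Delta, A \vdash B$ imply $\Delta \vdash B$) --- supplemented by the single observation that a bare basic sentence $p$ may be regarded as a (nullary, basic) clause, so that $C \cup \{p\}$ is again a clausal system extending $C$. With these facts in hand the argument is a short manipulation; no induction on formulas or on derivations is required, which is what makes this the appropriate analogue of the inference clause (\text{Inf}) of B-eS.

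For the left-to-right direction I would assume $p, C \vdash q$ and fix an arbitrary clausal system $D \supseteq C$ with $D \vdash p$. Since $C \subseteq D$ gives $\{p\} \cup C \subseteq \{p\} \cup D$, monotonicity upgrades the hypothesis to $p, D \vdash q$. Combining this with $D \vdash p$ by cut (taken at $\Delta = D$, cut formula $p$, conclusion $q$) yields $D \vdash q$, as required. For the converse I would instantiate the right-hand side at the witness $D := C \cup \{p\}$. Reading $p$ as the clause asserting $p$, this $D$ is a clausal system with $D \supseteq C$, and by identity $D \vdash p$; the hypothesis then delivers $D \vdash q$, which is exactly $p, C \vdash q$.

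The only genuinely delicate point, and hence where I expect the main obstacle to lie, is the legitimacy of this witness: one must check that adjoining a single basic sentence to a clausal system again yields a clausal system, i.e. that an atom counts as a clause under the grammar in force. This is precisely the clause form $(p_1 \land \dots \land p_n) \to q^*$ in the degenerate case $n = 0$, which is interderivable with the bare assertion of $q^*$, and it is also exactly what makes the intended correspondence between nullary atomic rules $\epsilon \Rightarrow c$ and clauses coherent. Once this closure property is granted --- or, alternatively, once one notes that the quantifier over $D$ may be read over arbitrary hypothesis sets --- the remainder of the proof is the routine structural reasoning above.
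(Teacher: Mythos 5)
Your proof is correct, and your forward direction coincides with the paper's, which disposes of it in one phrase (``follows by transitivity'', i.e.\ exactly your monotonicity-plus-cut step). Where you genuinely diverge is the reverse direction. The paper, modelling itself on Sandqvist's Lemma~2.2, takes the same witness $D = C \cup \{p\}$ but then argues ``by induction on the minimal proof of $D \vdash q$'' --- an induction that is genuinely needed in Sandqvist's original setting, where $p$ adjoined as a nullary \emph{rule} $\epsilon \Rightarrow p$ of a base is a different kind of object from $p$ as a hypothesis, so a derivation in the extended base must be transformed step by step into a derivation from hypotheses. You instead observe that in the clausal reformulation the two roles collapse: a bare basic sentence is a legitimate clause (the paper's bijection clause $[\epsilon \Rightarrow c] := c$ presupposes exactly this, and it is the degenerate $n = 0$ case of $(p_1 \land \dots \land p_n) \to q^*$), and since $\vdash$ here is ordinary intuitionistic consequence with a set of formulas on the left, $C \cup \{p\} \vdash q$ simply \emph{is} the judgment $p, C \vdash q$, so no induction is required. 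This is a legitimate simplification, and your explicit flagging of the one delicate point --- that adjoining an atom to a clausal system yields a clausal system --- identifies precisely where the paper's terse proof leaves the burden. What the paper's induction buys is robustness: it would still go through if clauses-as-axioms were kept formally distinct from hypotheses, e.g.\ under a strict resolution-calculus reading of $\vdash$. What your route buys is brevity, and it makes visible why the clausal setting trivializes the derivation-transformation that Sandqvist's base-level argument actually has to perform.
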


\begin{proof}
    This generalizes Sandqvist's Lemma 2.2~\cite{Sandqvist2015IL}, reformulated in terms of clausal systems rather than bases. The forward direction follows by transitivity. For the reverse direction, suppose $D \supseteq C$ and $D \vdash p$. The result follows by induction on the minimal proof of $D \vdash q$.
\end{proof}

Next, we introduce a generalized version of Mints' theorem, removing restrictions on the consequences of disjunction and absurdity:
\[
\clause{N} := \bigcup_{\chi=\Xi} \clause{M}_\mathbb{B}(\chi) \cup \{(\flatop{\bot} \to x) \mid x \in \mathbb{B}\}.
\]

\begin{lemma}[Modified Mints' Theorem] \label{lem:mod-mints}
For any formula $\phi$:
    \[
    \vdash \phi \quad \text{iff} \quad \clause{N} \vdash \flatop{\phi}.
    \]
\end{lemma}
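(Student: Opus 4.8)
The plan is to derive the Modified Mints' Theorem (Lemma~\ref{lem:mod-mints}) from the original Mints' Clausal Theorem (Lemma~\ref{lem:mints}) by reconciling the two clausal systems $\clause{N}$ and $\clause{M}_\mathbb{X}$. The essential difference is that $\clause{M}_\mathbb{X}$ restricts the consequences of disjunction and absurdity to the finite set $X$ (the range of $\flatop{(-)}$), whereas $\clause{N}$ allows \emph{every} basic sentence $x \in \mathbb{B}$ as a consequence, and $\clause{N}$ omits the auxiliary clauses involving the fresh sentence $y$ and the conjunction $\bigwedge\mathbb{X}$. Since the statement of Lemma~\ref{lem:mod-mints} concerns $\vdash \phi$ rather than $\vdash \phi[\flatop{\bot}/\bot]$, I would first reduce to the case where $\phi$ contains $\bot$ only as the conclusion of an implication (as assumed in Mints' setup), so that the substitution $\phi[\flatop{\bot}/\bot]$ behaves transparently; the clauses $\{(\flatop{\bot} \to x) \mid x \in \mathbb{B}\}$ in $\clause{N}$ are precisely what make $\flatop{\bot}$ behave as genuine absurdity, recovering $\vdash \phi$ from $\vdash \phi[\flatop{\bot}/\bot]$.

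The core of the argument is to show that the extra generality in $\clause{N}$ is conservative, i.e.\ that $\clause{N} \vdash \flatop{\phi}$ iff $\clause{M}_\mathbb{X} \vdash \flatop{\phi}$. For the direction from $\clause{M}_\mathbb{X}$ to $\clause{N}$, I would argue that every clause of $\clause{M}_\mathbb{X}$ is either already in $\clause{N}$ or derivable from it: the disjunction elimination clauses of $\clause{M}_\mathbb{X}$ instantiated at $x \in X$ are special cases of those in $\clause{N}$ (which range over all of $\mathbb{B} \supseteq X$), and the absurdity clauses $(\flatop{\bot} \to x)$ for $x \in X$ likewise appear among those for $x \in \mathbb{B}$; the clause $(\bigwedge\mathbb{X} \land y \to \flatop{\bot})$ together with $(\flatop{\bot} \to y)$ can be shown redundant for the target derivation of $\flatop{\phi}$ since $y$ is fresh and occurs in no subformula translation. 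For the converse direction, from $\clause{N}$ to $\clause{M}_\mathbb{X}$, I would show that any derivation using a disjunction clause instantiated at some $x \in \mathbb{B} \setminus X$ can be re-routed: since $x$ is not the flattened image of any subformula, such instances cannot contribute to deriving $\flatop{\phi}$, so they may be pruned without loss.

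Technically, the cleanest route is to establish the key conservativity claim by induction on the minimal proof, using Lemma~\ref{lem:inf-resolution} to manage the hypothetical structure of the disjunction-elimination clauses, which carry nested implications $(\flatop{\phi_1} \to x)$ and $(\flatop{\phi_2} \to x)$. The main obstacle I anticipate is the treatment of these disjunction clauses when quantified over all of $\mathbb{B}$: I must verify that enlarging the instantiation set from $X$ to $\mathbb{B}$ does not create new derivations of $\flatop{\phi}$ that were previously blocked, which amounts to a substitution/pruning argument showing that any atom outside $X$ appearing in a proof can be uniformly renamed or eliminated. Establishing this pruning rigorously --- that proofs of $\flatop{\phi}$ in $\clause{N}$ never essentially depend on out-of-range instances --- is the delicate step; once it is in hand, the biconditional of Lemma~\ref{lem:mod-mints} follows by chaining the conservativity result with Lemma~\ref{lem:mints} and the $\bot$-elimination clauses that equate $\flatop{\bot}$ with $\bot$.
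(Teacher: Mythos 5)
Your proposal takes essentially the same route as the paper: the paper's proof is exactly the two-step argument you outline, namely showing $\clause{N} \vdash \flatop{\phi}$ iff $\clause{M}_\mathbb{X} \vdash \flatop{\phi}$ by induction and then chaining with Mints' theorem (Lemma~\ref{lem:mints}). Your sketch is in fact more explicit than the paper's one-line proof --- notably in arguing the redundancy of the clause $(\bigwedge\mathbb{X} \land y \to \flatop{\bot})$ via the freshness of $y$, in the pruning of disjunction instances at $x \in \mathbb{B} \setminus \mathbb{X}$, and in addressing the gap between $\vdash \phi$ and $\vdash \phi[\flatop{\bot}/\bot]$, all of which the paper leaves implicit under ``follows straightforwardly by induction.''
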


\begin{proof}
    This follows straightforwardly by induction, showing $\clause{N} \vdash \flatop{\phi}$ iff $\clause{M}_\mathbb{X} \vdash \flatop{\phi}$, where $X$ is the range of $\flatop{(-)}$, and then applying Mints' theorem (Lemma~\ref{lem:mints}).
\end{proof}

Mints' motivation for introducing clausal representations was to develop resolution-based methods for intuitionistic logic. The resolution clauses are designed to define the atoms; for example, relative to $\clause{N}$,
\[
\flatop{(\chi_1 \land \chi_2)} \leftrightarrow (\flatop{\chi_1} \land \flatop{\chi_2})
\]
holds. Formally, this yields the following lemma:

\begin{lemma} \label{lem:reductions}
For any set of clauses $\clause{C}$, the following equivalences hold:
\begin{itemize}
    \item $\clause{C}, \clause{N} \vdash \flatop{(\chi_1 \land \chi_2)}$ iff $\clause{C}, \clause{N} \vdash \flatop{\chi_1} \land \flatop{\chi_2}$.
    \item $\clause{C}, \clause{N} \vdash \flatop{(\chi_1 \lor \chi_2)}$ iff $
    \clause{C}, \clause{N} \vdash (\flatop{\chi_1} \to x) \land (\flatop{\chi_2} \to x) \to x \text{ for } x \in\mathbb{X}$.
    \item $\clause{C}, \clause{N} \vdash \flatop{(\chi_1 \to \chi_2)}$ iff $\flatop{\chi}_1,\clause{C}, \clause{N} \vdash \flatop{\chi_2}$.
    \item $\clause{C}, \clause{N} \vdash \flatop{\bot}$ iff $\clause{C}, \clause{N} \vdash x$ for all $x \in\mathbb{X}$.
\end{itemize}
\end{lemma}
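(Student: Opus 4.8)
The plan is to dispatch the four equivalences one connective at a time. In every case only the clauses that $\clause{N}$ attaches to the relevant subformula are used; the ambient system $\clause{C}$ and the remaining clauses of $\clause{N}$ contribute nothing and survive by monotonicity of $\vdash$. Throughout I would rely on three facts about derivability in a clausal system: it is closed under modus ponens (so a clause may be ``fired'' once its antecedent is derived), under conjunction introduction and projection, and --- by Lemma~\ref{lem:inf-resolution} --- under the discharge of an atomic hypothesis. Each ``iff'' then splits into an \emph{unpacking} direction, which reads $\flatop{\chi}$ through the elimination and injection clauses, and a \emph{packing} direction, which reconstitutes $\flatop{\chi}$ through the introduction clauses.

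The conjunction and absurdity cases are immediate. For $\flatop{(\chi_1 \land \chi_2)}$, the unpacking direction applies $\flatop{(\chi_1 \land \chi_2)} \to \flatop{\chi_1}$ and $\flatop{(\chi_1 \land \chi_2)} \to \flatop{\chi_2}$ and then forms the conjunction, while the packing direction projects $\flatop{\chi_1}$ and $\flatop{\chi_2}$ from $\flatop{\chi_1} \land \flatop{\chi_2}$ and fires $\flatop{\chi_1} \land \flatop{\chi_2} \to \flatop{(\chi_1 \land \chi_2)}$. For $\flatop{\bot}$, the unpacking direction fires the clauses $\flatop{\bot} \to x$, present in $\clause{N}$ for every $x \in \mathbb{B} \supseteq \mathbb{X}$, to obtain each $x$, whereas the packing direction is obtained simply by instantiating the quantified hypothesis at $x = \flatop{\bot}$.

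The disjunction case runs on the same instantiation idea. In the unpacking direction, given $\flatop{(\chi_1 \lor \chi_2)}$ and a fixed $x \in \mathbb{X}$, I would assume $(\flatop{\chi_1} \to x) \land (\flatop{\chi_2} \to x)$, fire the elimination clause $\flatop{(\chi_1 \lor \chi_2)} \land (\flatop{\chi_1} \to x) \land (\flatop{\chi_2} \to x) \to x$ to obtain $x$, and discharge the assumption. In the packing direction I would instantiate the hypothesis at $x = \flatop{(\chi_1 \lor \chi_2)}$ --- legitimate because $\chi_1 \lor \chi_2 \in \Xi$ forces $\flatop{(\chi_1 \lor \chi_2)} \in \mathbb{X}$ --- and discharge its antecedent using the two injection clauses $\flatop{\chi_1} \to \flatop{(\chi_1 \lor \chi_2)}$ and $\flatop{\chi_2} \to \flatop{(\chi_1 \lor \chi_2)}$, leaving $\flatop{(\chi_1 \lor \chi_2)}$.

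I expect the implication case to be the one real obstacle, since it is where the hypothesis added on the right of $\vdash$ must be turned into the antecedent of a clause, and this is exactly what Lemma~\ref{lem:inf-resolution} is for. The unpacking direction is routine: adjoining $\flatop{\chi_1}$ to the context preserves derivability of $\flatop{(\chi_1 \to \chi_2)}$ by monotonicity, and firing the clause $\flatop{(\chi_1 \to \chi_2)} \land \flatop{\chi_1} \to \flatop{\chi_2}$ yields $\flatop{\chi_2}$, so $\flatop{\chi_1}, \clause{C}, \clause{N} \vdash \flatop{\chi_2}$. For the converse, from $\flatop{\chi_1}, \clause{C}, \clause{N} \vdash \flatop{\chi_2}$ Lemma~\ref{lem:inf-resolution} furnishes exactly the hypothetical derivation of $\flatop{\chi_2}$ from $\flatop{\chi_1}$ that licenses firing the introduction clause $(\flatop{\chi_1} \to \flatop{\chi_2}) \to \flatop{(\chi_1 \to \chi_2)}$, discharging $\flatop{\chi_1}$ and producing $\flatop{(\chi_1 \to \chi_2)}$. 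The delicate point is ensuring that the extension quantifier supplied by Lemma~\ref{lem:inf-resolution} lines up with the side condition under which that clause may be applied; once that bookkeeping is in place the equivalence closes.
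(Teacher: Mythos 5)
Your proposal is correct and takes essentially the same approach as the paper: its proof likewise proceeds by direct inspection of the clauses of $\clause{N}$ and \emph{modus ponens}, illustrating only the disjunction case (with exactly your instantiation at $x = \flatop{(\chi_1 \lor \chi_2)}$ discharged by the injection clauses), and your other three cases --- including your silent correction of the garbled implication clauses in $\clause{M}_\mathbb{X}(\phi_1 \to \phi_2)$ to $\flatop{(\phi_1 \to \phi_2)} \land \flatop{\phi_1} \to \flatop{\phi_2}$ and $(\flatop{\phi_1} \to \flatop{\phi_2}) \to \flatop{(\phi_1 \to \phi_2)}$ --- are the routine expansions the paper leaves implicit. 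The only quibble is that in the implication case the discharge you need is just the deduction theorem for intuitionistic $\vdash$ (the paper's ``basic reasoning in intuitionistic logic''), so Lemma~\ref{lem:inf-resolution}, whose extension quantifier exists to match the (Inf) clause in Lemma~\ref{lem:correspondence} rather than to license clause-firing, is heavier machinery than necessary, and the ``delicate point'' you flag dissolves once it is replaced by plain implication introduction.
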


\begin{proof}
    Each case follows by direct inspection of $\clause{N}$. We illustrate the case for disjunction:
    \[
    \clause{C}, \clause{N} \vdash \flatop{(\chi_1 \lor \chi_2)} \quad \text{iff} \quad \clause{C}, \clause{N} \vdash (\flatop{\chi_1} \to x) \land (\flatop{\chi_2} \to x) \to x \text{ for } x \in \mathbb{B}.
    \]
    By construction, the axioms
    \[
    (\flatop{\chi_1} \to \flatop{(\chi_1 \lor \chi_2)}), \quad (\flatop{\chi_2} \to \flatop{(\chi_1 \lor \chi_2)}),
    \]
    and 
    \[
    \flatop{(\chi_1 \lor \chi_2)} \land (\flatop{\chi_1} \to x) \land (\flatop{\chi_2} \to x) \to x
    \]
    are included in $\clause{N} \cup \clause{C}$ for all $x \in \mathbb{B}$. The result follows by \emph{modus ponens}.
\end{proof}

We now define a bijection $[-]$ mapping atomic rules to clauses:
\begin{align*}
[\epsilon \Rightarrow c] &:= c, \\ 
[\big((P_1 \Rightarrow p_1),\dots,(P_n \Rightarrow p_n) \Rightarrow c\big)] &:= (\bigwedge P_1 \to p_1)  \land \dots \land (\bigwedge P_n \to p_n) \to c.
\end{align*}
where $\clause{C},p_1,\dots,p_n$ are basic and $P_i, \dots, P_n$ are finite sets of basic sentences. In the case that $P_i = \emptyset$, we interpret $(\bigwedge P_i \to p_i)$ as $p_i$. This extends to bases $\base{B}$ pointwise:
\[
[\base{B}] := \{ [r] \mid r \in \base{B} \}.
\]
By induction on proof length, we obtain:
\[
[\base{B}] \vdash p \quad \text{iff} \quad \vdash_{\base{B}} p. \tag{$\ast$}
\]
Moreover, $\base{C} \supseteq \base{B}$ iff $[\base{C}] \supseteq [\base{B}]$.

This gives us a smooth way to move between Sandqvist's setup and Mints'. That is, we have the following equivalence:

\begin{lemma} \label{lem:correspondence}
For any base $\base{B}$,
    \[
[\base{B}], \clause{N} \vdash \flatop{\phi} \quad \text{iff} \quad \supp_{\base{B}} \phi
\]
\end{lemma}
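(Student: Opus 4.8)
The plan is to prove Lemma~\ref{lem:correspondence} by semantic induction on the logical weight $w(\phi)$, exactly the induction measure on which the support relation in Figure~\ref{fig:sandqvist} is itself defined. The statement links the two formalisms established so far: the translation $[-]$ moving bases to clausal systems (with property $(\ast)$ and the extension-monotonicity remark), the reduction equivalences of Lemma~\ref{lem:reductions}, and the resolution-vs-inference correspondence of Lemma~\ref{lem:inf-resolution}. The strategy is to peel off the outermost connective of $\phi$ on both sides of the biconditional, match the B-eS clause against the corresponding clause of Lemma~\ref{lem:reductions}, and close each case by invoking the induction hypothesis on strictly lighter subformulas.

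The base case is $\phi = p \in \setAtom$. Here $\flatop{p} = p$, so $[\base{B}], \clause{N} \vdash p$ must be shown equivalent to $\supp_{\base{B}} p$. By clause~(At) the right side is $\vdash_{\base{B}} p$, which by $(\ast)$ is $[\base{B}] \vdash p$; since the clauses of $\clause{N}$ only define the auxiliary flattened atoms and cannot contribute a derivation of a genuine basic $p$ occurring in $\phi$, adding $\clause{N}$ is conservative over $p$, giving the equivalence. For the inductive step I would treat each connective in turn. For $\phi = \phi_1 \land \phi_2$, clause~$(\land)$ gives $\supp_{\base{B}} \phi_1 \land \phi_2$ iff $\supp_{\base{B}}\phi_1$ and $\supp_{\base{B}}\phi_2$; the first bullet of Lemma~\ref{lem:reductions} rewrites the left-hand derivability to $[\base{B}],\clause{N}\vdash \flatop{\phi_1} \land \flatop{\phi_2}$, i.e.\ both conjuncts derivable, and the induction hypothesis on $\phi_1,\phi_2$ closes the case. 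For $\phi = \phi_1 \to \phi_2$, clause~$(\to)$ reads $\phi_1 \supp_{\base{B}} \phi_2$, which by~(Inf) quantifies over all $\base{C} \supseteq \base{B}$; here I would apply Lemma~\ref{lem:inf-resolution} to convert the implicational derivability $[\base{B}],\clause{N}\vdash \flatop{(\phi_1\to\phi_2)}$ (reduced via the third bullet of Lemma~\ref{lem:reductions} to $\flatop{\phi_1},[\base{B}],\clause{N}\vdash\flatop{\phi_2}$) into the universally-quantified extension statement, then invoke the induction hypothesis.

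The disjunction and absurdity cases are where the real work lies, and I expect the $\lor$ case to be the main obstacle. The clause~$(\lor)$ definition and the second bullet of Lemma~\ref{lem:reductions} both quantify over all atoms $x$ and over all extensions $\base{C}\supseteq\base{B}$, so the two universal quantifiers must be shown to range over matching domains under the translation. The delicate point is that~$(\lor)$ quantifies over $p \in \setAtom$ and over bases $\base{C}$, whereas Lemma~\ref{lem:reductions} quantifies over $x \in \mathbb{X}$ (the flattened atoms) inside the fixed clausal system; bridging these requires the monotonicity remark $\base{C}\supseteq\base{B}$ iff $[\base{C}]\supseteq[\base{B}]$ together with the observation that support at an arbitrary basic $p$ corresponds, via the induction hypothesis applied to the hypotheses $\phi_1\supp_\base{C} p$ and $\phi_2 \supp_\base{C} p$, to the conditional clauses $\flatop{\phi_i}\to x$. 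One must argue that it suffices to range over $x \in \mathbb{X}$ rather than all of $\mathbb{B}$ because any genuinely fresh atom behaves trivially. The $\bot$ case then follows the same pattern using clause~$(\bot)$ and the fourth bullet of Lemma~\ref{lem:reductions}, with the extra definitional clauses $\flatop{\bot}\to x$ in $\clause{N}$ ensuring $\flatop{\bot}$ derives every atom exactly when $\supp_{\base{B}}\bot$ holds. Throughout, the central discipline is to ensure every appeal to the induction hypothesis is on a subformula of strictly smaller logical weight, which the definitions of $w$ and of Figure~\ref{fig:sandqvist} guarantee.
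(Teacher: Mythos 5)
Your overall architecture is exactly the paper's: the paper proves Lemma~\ref{lem:correspondence} by semantic induction on $\phi$, combining $(\ast)$, the reduction equivalences of Lemma~\ref{lem:reductions}, and Lemma~\ref{lem:inf-resolution}, peeling off the outermost connective and matching each clause of Figure~\ref{fig:sandqvist} against the corresponding bullet of Lemma~\ref{lem:reductions}. The paper writes out only the $\lor$ case (declaring the others similar), and your sketch of that case follows the same chain: reduction via Lemma~\ref{lem:reductions}, the deduction theorem, Lemma~\ref{lem:inf-resolution} together with the monotonicity property $\base{C} \supseteq \base{B}$ iff $[\base{C}] \supseteq [\base{B}]$, the induction hypothesis, and finally clause $(\lor)$.

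However, two of the steps you single out as ``the real work'' would not go through as you state them. First, the quantifier-domain bridge in the $\lor$ case: you propose to argue that ranging over $x \in \mathbb{X}$ suffices ``because any genuinely fresh atom behaves trivially.'' This claim is both unnecessary and unsubstantiable --- unnecessary because the modified system $\clause{N}$ is built from $\clause{M}_{\mathbb{B}}$, so the $\lor$-elimination clauses are present for \emph{every} $x \in \mathbb{B}$, and the quantifier already matches the quantifier over $p \in \setAtom$ in clause $(\lor)$ (the ``$x \in \mathbb{X}$'' in the statement of Lemma~\ref{lem:reductions} is a slip; its proof explicitly runs over $x \in \mathbb{B}$, which is the whole point of the modification in Lemma~\ref{lem:mod-mints}); and unsubstantiable because nothing prevents an extension $\base{C}$ from mentioning a fresh atom nontrivially, so ``behaves trivially'' is not an argument one could actually complete. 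Second, your base-case justification that the clauses of $\clause{N}$ ``cannot contribute a derivation of a genuine basic $p$'' is stated too strongly: $\clause{N}$ contains clauses whose conclusions are arbitrary basic sentences (the $\lor$-elimination clauses and $\flatop{\bot} \to x$ for all $x \in \mathbb{B}$), so conservativity of $[\base{B}] \cup \clause{N}$ over $[\base{B}]$ for atomic conclusions is not a matter of inspecting conclusions of clauses --- it requires an argument about when those clauses can fire, and is delicate precisely when $\base{B}$ itself mentions flattened atoms. To be fair, the paper is equally terse at this point (it cites only $(\ast)$ for the atomic case), but since you flagged the step explicitly, the justification you give for it is the wrong one.
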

\begin{proof}
The result follows by semantic induction on \( \phi \) using \((\ast)\), Lemma~\ref{lem:reductions}, and Lemma~\ref{lem:inf-resolution}. We show the case for disjunction (\( \lor \)), the others being similar.

Suppose \( \phi = \chi_1 \lor \chi_2 \). We proceed as follows. By Lemma~\ref{lem:reductions}, 
\[
[\base{B}], \clause{N} \vdash \flatop{(\chi_1 \lor \chi_2)}
\]
is equivalent to: for $x \in\mathbb{B}$,
\[
[\base{B}], \clause{N} \vdash (\flatop{\chi_1} \to x) \land (\flatop{\chi_2} \to x) \to x .
\]
By basic reasoning in intuitionistic logic, this is equivalent to: for \( x \in\mathbb{B} \),
\[
(\flatop{\chi_1} \to x) \land (\flatop{\chi_2} \to x), [\base{B}], \clause{N} \vdash x .
\]
By Lemma~\ref{lem:inf-resolution} and properties of \( [-] \), this is equivalent to: for \( x \in\mathbb{B} \) and \( \base{C} \supseteq \base{B} \),
\[
[\base{C}], \clause{N} \vdash (\flatop{\chi_1} \to x) \land (\flatop{\chi_2} \to x) \quad \text{implies} \quad [\base{C}], \clause{N} \vdash x .
\]
By further intuitionistic reasoning, this is equivalent to the following: for \( x \in\mathbb{B} \) and \( \base{C} \supseteq \base{B} \),
\[
\flatop{\chi_1}, [\base{C}], \clause{N} \vdash x \mbox{ and } \flatop{\chi_2}, [\base{C}], \clause{N} \vdash  x \quad \text{implies} \quad [\base{C}], \clause{N} \vdash x .
\]
By applications of Lemma~\ref{lem:inf-resolution} and properties of \( [-] \), the \emph{induction hypothesis} (IH) and the definition of support, this is equivalent to: for \( x \in\mathbb{B} \) and \( \base{C} \supseteq \base{B} \), 
\[
(\chi_1 \supp_{\base{C}} x \text{ and } \chi_2 \supp_{\base{C}} x) \quad \text{implies} \quad \supp_{\base{C}} x .
\]
Finally, applying the definition of disjunction (\( \lor \)), this is equivalent to the desired result:
\[
\supp_{\base{B}} \chi_1 \lor \chi_2 .
\]
\end{proof}

These results collectively suffice to prove soundness and completeness of intuitionistic sentential logic with respect to the B-eS: 
\begin{theorem}
    $\supp \phi$ iff $\vdash \phi$.
\end{theorem}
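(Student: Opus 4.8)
The plan is to obtain the theorem as a direct corollary of the three main lemmas by chaining them into a single string of equivalences. Since every genuine piece of work has already been discharged --- the translation into clauses, the reduction lemmas, and the correspondence between support and clausal derivability --- the theorem itself should require no fresh induction and amounts to a bookkeeping argument linking $\supp \phi$ to $\vdash \phi$ through the clausal system $\clause{N}$.

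First I would invoke Lemma~\ref{lem:empty-base} to replace the quantification over all bases implicit in $\supp \phi$ with support over the single empty base, giving $\supp \phi$ iff $\supp_\emptyset \phi$. This is the step that lets us apply the correspondence lemma at a fixed base, rather than having to reason directly through the extension clause (\text{Inf}) of support.

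Next I would instantiate Lemma~\ref{lem:correspondence} at $\base{B} = \emptyset$, obtaining $\supp_\emptyset \phi$ iff $[\emptyset], \clause{N} \vdash \flatop{\phi}$. Observing that the translation $[-]$ sends the empty base to the empty clausal system, so that $[\emptyset] = \emptyset$, this collapses to $\clause{N} \vdash \flatop{\phi}$. Finally, Lemma~\ref{lem:mod-mints} (the modified Mints' theorem) supplies $\clause{N} \vdash \flatop{\phi}$ iff $\vdash \phi$. Concatenating the three equivalences yields $\supp \phi$ iff $\vdash \phi$, as required.

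I do not expect a genuine obstacle at this final step: the conceptual difficulty has been absorbed into Lemma~\ref{lem:correspondence}, whose semantic induction had to align the support clauses of Figure~\ref{fig:sandqvist} with the reductions of Lemma~\ref{lem:reductions}. The only points demanding care are confirming that $[\emptyset] = \emptyset$, so that the free-standing $\clause{N}$ is what appears on the right, and checking that $\flatop{\phi}$ and $\clause{N}$ denote the same objects in the correspondence lemma and in the modified Mints' theorem --- that is, that both are built from the single injection $\flatop{(-)}$ fixed for $\phi$. With those alignments in place the result is immediate.
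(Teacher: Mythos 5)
Your proposal is correct and is essentially identical to the paper's own proof, which chains Lemma~\ref{lem:mod-mints}, Lemma~\ref{lem:correspondence} (at the empty base), and Lemma~\ref{lem:empty-base} into the same string of equivalences, merely read in the opposite direction. Your explicit check that $[\emptyset] = \emptyset$ is a detail the paper leaves tacit, but it is the right thing to verify.
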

\begin{proof}
We reason as follows:
\begin{align}
    \vdash \phi \qquad &\mbox{iff} \qquad \clause{N} \vdash \flatop{\phi} \tag{Lemma~\ref{lem:mod-mints}} \\
     &\mbox{iff} \qquad \supp_{\emptyset} \phi \tag{Lemma~\ref{lem:correspondence}} \\
     &\mbox{iff} \qquad \supp \phi \tag{Lemma~\ref{lem:empty-base}} 
\end{align}
\end{proof}

\section{Discussion}

We have established a precise correspondence between Sandqvist’s base-extension semantics (B-eS) for intuitionistic sentential logic~\cite{Sandqvist2015IL} and Mints’ resolution systems for intuitionistic logic~\cite{Mints1981,Mints1987,Mints1990}. More specifically, the central idea underpinning Sandqvist’s completeness proof for B-eS is, in essence, Mints’ construction from three decades earlier. Yet, despite this mathematical equivalence, the two frameworks arise from entirely different motivations --- making their connection not only intriguing but also significant.

The B-eS is somewhat enigmatic within proof-theoretic semantics. The field is often motivated by the stance that the rules of a proof calculus, such as Gentzen’s NJ~\cite{Gentzen}, may be thought to `define' the logical constants. This is the philosophical background to proof-theoretic semantics given by, for example,  Gentzen~\cite{Gentzen},  Prawitz~\cite{prawitz1971ideas}, and Dummett~\cite{dummett1991logical}, among others. While there has been some discussion of how the B-eS arises from such rules (cf. ~\cite{ggp2024practice}), no formal account has yet been given.  Thus, while the B-eS is a proof-theoretic semantics in the sense that support reduces to derivability in a base (Definition~\ref{def:derbase}), it remains unclear how the inferential behaviour of the logical connectives is represented.

 Moreover, unlike Kripke's semantics~\cite{kripke1965semantical} for intuitionistic logic, which is typically justified via the metaphor of knowledge accumulation, as articulated by, for example, Dummett~\cite{dummett2000elements}), the B-eS lacks a clear explanation of how it expresses any form of constructivism. In other words: while one may say that the satisfaction relation expresses truth-at-a-state-of-knowledge. There is no apparent account of what the support relation in B-eS is intended to represent.
 
 The correspondence with Mints' work~\cite{Mints1981,Mints1987,Mints1990} begins to clarify these issues. Formally, it demonstrates that support for a formula $\phi$ in a base $\base{B}$ corresponds to proof-search for $\phi$ within a resolution system determined by $\base{B}$. Since these resolution systems preserve the structure of NJ, this reveals that B-eS provides a semantics that is indeed faithful to the inferential behaviour of the logical constants of intuitionistic logic. Moreover, as bases are often read as an agent's set of inferential beliefs, the support relation can thus be understood as the ability of the agent to resolve --- that is, to construct an argument for --- the proposition. So the major result of the technical work herein is that it finally justifies the B-eS within both  the inferentialist traditions of proof-theoretic semantics and within intuitionism.   

Mints’ work~\cite{Mints1981,Mints1987,Mints1990} reveals that resolution is more than a computational or refutational technique --- it is a structural transformation of natural deduction. However, on its own, it does not explain the deeper significance of resolution for constructive reasoning. The connection to the B-eS~\cite{Sandqvist2009CL} for intuitionistic logic in this paper can then be reciprocally interpreted as saying that resolution directly captures the semantic content of the logical constants as determined by their inferential roles since it reconstructs the conditions under which a proposition can be said to be inferentially supported. In this sense, resolution is not merely an algorithmic procedure, but a principled means of articulating what it is to support a proposition constructively.

The broader lesson from this correspondence is that proof-search should play a more explicit role in proof-theoretic semantics. While model-theoretic semantics is only concerned with truth-preservation across deductive inference, the results herein demonstrate that proof-theoretic semantics is not only concerned with proofs but also with the process of finding them --- that is, proof-search. This conceptual shift highlights how proof-theoretic semantics departs from extant semantic paradigms and clarifies how B-eS treats inference as constitutive of meaning, not merely as a mechanism for checking it. This is perhaps not surprising, given the well-documented connections between proof-theoretic semantics and logic programming (cf.~\cite{Gheorghiu2023Definite}). A more systematic study of these links could yield new insights into the mathematical foundations of B-eS and the semantic content of proof-search itself (cf.~\cite{Gheorghiu2025Semantic}).

\bibliographystyle{siam}
\bibliography{bib}

\end{document}